\newtheorem{thm}{Theorem}[section]
\newtheorem{lem}[thm]{Lemma}
\newtheorem{prop}[thm]{Proposition}
\newtheorem{prob}[thm]{Problem}
\theoremstyle{definition}
\newtheorem{defin}[thm]{Definition}
\numberwithin{equation}{section}
\begin{document}


\baselineskip=17pt


\title{Construction of functions with the given cluster sets}

\author{Oleksandr V. Maslyuchenko\\
Instytute of Mathematics\\
Academia Pomeraniensis in S{\l}upsk\\
76-200 S{\l}upsk, Poland\\
E-mail: ovmasl@apsl.edu.pl
\and
Denys P. Onypa\\
Department of Mathematical
Analysis\\
Chernivtsi National University\\
58012 Chernivtsi, Ukraine\\
E-mail: denys.onypa@gmail.com}

\date{}

\maketitle


\renewcommand{\thefootnote}{}

\footnote{2015 \emph{Mathematics Subject Classification}: Primary 54C30; Secondary 54C10.}

\footnote{\emph{Key words and phrases}: cluster set, locally connected space.}

\renewcommand{\thefootnote}{\arabic{footnote}}
\setcounter{footnote}{0}

\sloppy


\begin{abstract}
In this paper we continue our research of functions on the boundary of their domain and obtain some results on cluster sets of functions between topological spaces. In particular,
we prove that for a metrizable topological space $X$, a dense subspace $Y$ of a metrizable compact space $\overline{Y}$, a closed nowhere dense subset $L$ of $X$, an upper continuous compact-valued multifunction ${\Phi:L\multimap \overline{Y}}$ and a set $D\subseteq X\setminus L$ such that $L\subseteq \overline{D}$, there exists a function $f:D\to Y$ such that the cluster set $\overline{f} (x)$ is equal to $\Phi (x)$ for any $x\in L$.
\end{abstract}

\section{Introduction}

The notion of a cluster set was first formulated by Painleve \cite{P} in the study of analytic functions of a complex variable. A fundamental research of cluster sets of analytic functions was carried out in \cite{CL}. Actually, the notion of cluster sets is a topological notion which characterizes a behavior of a function on the boundary of the domain of this function. The oscillation \cite{KO} is another limit characteristic of functions which is tightly connected with the notion of cluster sets. Namely, the oscillation is the diameter of the cluster set.

The problem of the construction of functions with the given oscillation was investigated in \cite{KO,EP,DGP,KOW,MM,M1,M2,M3}. In \cite{MASON1} we researched the question on the limit oscillation of locally stable functions defined on open subsets of reals.
In \cite{MASON2} we proved that for any upper semicontinuous function ${f:F\to [0;+\infty]}$, which is defined on
the boundary $F=\overline{G}\setminus G$ of some open subset $G$ of a metrizable space $X$, there is a continuous
function $g:G\to \mathbb R$ such that the  oscillation $\omega_g(x)$ is equal to $f(x)$ for $x\in F$.
In this paper we deal with the question about the construction of functions with the given cluster sets.

Let $F$ be a multifunction from $X$ to $Y$. Recall, that \emph{the domain of} $F$ is
$$
\mathrm{dom}\, F=\{x\in X:F(x)\ne\emptyset\}.
$$
In the case when $\mathrm{dom}\, F=D$ we write $F:D\multimap Y$.
\emph{The graph} ${\rm gr} F$ of $F$ is a subset of $X\times Y$ which is defined by
$$\textstyle {\rm gr} F=\bigcup\limits_{x\in X} \{x\}\times F(x).$$
So, we have that $\mathrm{dom}\,F$ is the projection of $\mathrm{gr}\,F$ on $X$.

\begin{defin}
Let $X$ and $Y$ be topological spaces, $D\subseteq X$ and $f:D \to Y$. \emph{The cluster set of $f$ at a point} $x\in X$ is called

$$\textstyle \overline f (x)=\bigcap\limits_{U\in \mathcal{U}_x}\overline {f(U\cap D)},$$
where $\mathcal{U}_x$ denotes the collection of all neighborhoods of $x$ in $X$.
The corresponding multifunction $\overline{f}$ we call \emph{the cluster multifunction of} $f$.
\end{defin}

It is not hard to show that the graph $\mathrm{gr}\, \overline{f}$ of the cluster multifunction $\overline{f}$ is the closure of the graph $\mathrm{gr}\, f$ of $f$.
In general, $D={\rm dom} f \subseteq {\rm dom} \overline f \subseteq \overline D$.
Let $Y$ be a compact space. By the Kuratowski theorem \cite{EN} the projection on $X$
of any closed subset of $X\times Y$ is closed in $X$. So, the domain of $\overline{f}$ is a closed subset of $X$  as the projection of $\mathrm{gr}\, \overline{f}=\overline{\mathrm{gr}\,f}$. Then $\mathrm{dom}\, \overline{f}=\overline{D}$.
It is easy to prove that  $f$ is continuous if and only if $\overline f(x)=\{f(x)\}$ for any $x\in D$. Moreover, if $Y$ is a metric compact, then the oscillation $\omega_f(x)$ of $f$ at a point $x\in \overline{D}$ is equal to the diameter $\mathrm{diam}\, \overline{f}(x)$ of the cluster set $\overline{f}(x)$.

So, we deal with the following problem.

\begin{prob} Let $X$  be a topological space and $Y$ be a dense subspace of a compact space $\overline{Y}$, $D\subseteq X$ and $L\subseteq\overline D\setminus D$. Describe all multifunctions $\Phi:L\multimap \overline{Y}$ such that there exists a function $f:D\rightarrow Y$ with $\overline f(x)= \Phi(x)$ for any $x\in L$.
\end{prob}

Proposition~\ref{P6} and Theorem~\ref{T2} give an answer on this problem in the case when
$X$ and $\overline{Y}$ are metrizable spaces and
 $L$ is a closed nowhere dense subset of $X$.

\section{$\sigma$-discrete sets}

\begin{defin}
A subset $S$ of a space $X$ is called
\begin{itemize}

\item \emph{discrete} if for any $x\in S$ there exists a neighborhood $U$ of $x$ in $X$ such that $U\cap S=\{x\}$;

\item an $\varepsilon$-\emph{net} if for any $x\in X$ there exists $y\in S$ such that $d(x,y)< \epsilon$;

\item $\varepsilon$-\emph{separated} if ${|s-t|_X\geq\varepsilon}$ for any different points $s,t\in S$;

\item \emph{separated} if it is $\varepsilon$-separated for some $\varepsilon>0$;

\item $\sigma$-\emph{discrete} if there exists a sequence of discrete sets $S_n$ such that ${S=\bigcup\limits_{n=1}^{\infty} S_n}$.

\end{itemize}

\end{defin}
\begin{prop}\label{P1}
Let $X$ be a metrizable space and $\varepsilon >0$. Then there exists an $\varepsilon$-separated $\varepsilon$-net $S$ in $X$.
\end{prop}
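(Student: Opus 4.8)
The plan is to obtain $S$ as a \emph{maximal} $\varepsilon$-separated set, using Zorn's lemma, and then observe that maximality forces $S$ to be an $\varepsilon$-net. First I would fix a metric $d$ on $X$ that generates its topology (possible since $X$ is metrizable), and read the notation $|s-t|_X$ as $d(s,t)$. Consider the family $\mathcal{F}$ of all $\varepsilon$-separated subsets of $X$, partially ordered by inclusion. The empty set is vacuously $\varepsilon$-separated, so $\mathcal{F}\ne\emptyset$.

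Next I would verify the chain condition needed for Zorn's lemma. Given a chain $\mathcal{C}\subseteq\mathcal{F}$, its union $T=\bigcup\mathcal{C}$ is again $\varepsilon$-separated: any two distinct points $s,t\in T$ lie in members $A,B\in\mathcal{C}$, and since $\mathcal{C}$ is a chain one of these contains the other, so both $s$ and $t$ belong to a single $\varepsilon$-separated set, whence $d(s,t)\ge\varepsilon$. Thus $T\in\mathcal{F}$ is an upper bound for $\mathcal{C}$, and Zorn's lemma yields a maximal element $S\in\mathcal{F}$.

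Finally I would show that this maximal $S$ is an $\varepsilon$-net. Fix $x\in X$. If $x\in S$, then $y=x$ satisfies $d(x,y)=0<\varepsilon$. If $x\notin S$, suppose for contradiction that $d(x,y)\ge\varepsilon$ for every $y\in S$. Then $S\cup\{x\}$ is still $\varepsilon$-separated: two distinct points of it are either both in $S$, or one is $x$ and the other some $y\in S$ with $d(x,y)\ge\varepsilon$. This contradicts the maximality of $S$, so there must exist $y\in S$ with $d(x,y)<\varepsilon$. Hence $S$ is an $\varepsilon$-net, and it is $\varepsilon$-separated by construction, completing the proof.

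There is no serious obstacle here; the only point requiring care is the interplay of the strict inequality in the definition of an $\varepsilon$-net with the non-strict inequality defining $\varepsilon$-separatedness, which is exactly why the maximality argument delivers the strict bound $d(x,y)<\varepsilon$. If one preferred to avoid Zorn's lemma entirely, the same set $S$ could instead be built by transfinite recursion, well-ordering $X$ and greedily adjoining each point that lies at distance $\ge\varepsilon$ from all previously chosen points; but the maximality formulation is cleaner and I would present that.
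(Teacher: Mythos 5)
Your proof is correct and follows essentially the same route as the paper: both obtain a maximal $\varepsilon$-separated set via a maximality principle (the paper uses the Teichm\"uller--Tukey lemma on the finite-character property of $\varepsilon$-separatedness, you use Zorn's lemma with a direct chain check) and then derive the $\varepsilon$-net property from maximality. The two maximality principles are equivalent, so the difference is purely cosmetic.
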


\begin{proof}[Proof]
Since the $\varepsilon$-separability is a property of finite character, using the Teichm\"uller-Tukey lemma \cite[p.8]{EN} we build a maximal $\varepsilon$-separated set $S$. Then the maximality of $S$ implies that $S$ is an $\varepsilon$-net.
\end{proof}

\begin{prop}\label{P2}
Let $X$ be a metrizable space. Then there exist separated sets $A_n$ such that $A=\bigcup\limits_{n=1}^{\infty} A_n$ is a dense subset of $X$. In particular, the set $A$ is $\sigma$-discrete and dense in $X$.
\end{prop}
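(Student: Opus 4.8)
The plan is to reduce everything to Proposition~\ref{P1} by choosing a decreasing sequence of scales $\varepsilon_n\to 0$. First I would fix a metric $d$ on $X$ compatible with its topology, which is possible since $X$ is metrizable; this is what makes the notions of $\varepsilon$-net and $\varepsilon$-separated set meaningful. Then, for each $n\in\mathbb{N}$, I would apply Proposition~\ref{P1} with $\varepsilon=\tfrac1n$ to obtain a $\tfrac1n$-separated $\tfrac1n$-net $A_n\subseteq X$, and I would set $A=\bigcup_{n=1}^\infty A_n$. By construction each $A_n$ is $\tfrac1n$-separated, hence separated, so the $A_n$ are already the required separated sets.

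Next I would verify that $A$ is dense. Given any $x\in X$ and any $\delta>0$, I would pick $n$ with $\tfrac1n<\delta$; since $A_n$ is a $\tfrac1n$-net, there exists $y\in A_n$ with $d(x,y)<\tfrac1n<\delta$, so $y\in A$ lies in the ball of radius $\delta$ about $x$. As $x$ and $\delta$ are arbitrary, every nonempty open set meets $A$, which is exactly density.

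Finally, for the ``in particular'' clause I would observe that every separated set is discrete: if $S$ is $\varepsilon$-separated and $x\in S$, then the ball $B(x,\varepsilon)$ is a neighbourhood of $x$ meeting $S$ only in $x$, since any other point of $S$ is at distance at least $\varepsilon$ from $x$. Thus each $A_n$ is discrete, and $A=\bigcup_{n=1}^\infty A_n$ is a countable union of discrete sets, i.e.\ $\sigma$-discrete by definition.

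There is no real obstacle here: the whole statement is a routine packaging of Proposition~\ref{P1} applied at the scales $\tfrac1n$. The only points deserving a word of care are fixing a compatible metric at the outset (so that ``net'' and ``separated'' make sense) and the elementary implication ``separated $\Rightarrow$ discrete'', both of which are immediate.
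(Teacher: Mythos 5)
Your proof is correct and follows exactly the paper's argument: apply Proposition~\ref{P1} at the scales $\varepsilon=\tfrac1n$, take the union, and note that separated sets are discrete. You simply spell out the density check and the ``separated $\Rightarrow$ discrete'' implication, which the paper leaves as obvious.
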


\begin{proof}[Proof]
Using Proposition~\ref{P1} we pick an $\frac{1}{n}$-separated $\frac{1}{n}$-net $A_n$ for any $n$ and put $A=\bigcup\limits_{n=1}^{\infty} A_n$. Obviously, the set $A$ is dense. Since every separated set is discrete, then $A$ is $\sigma$-discrete as a union of separated sets.
\end{proof}

\begin{prop}\label{P3}
Let $X$ be a metrizable space and $S\subseteq X$. Then the following statements are equivalent:

$(i)$ $S$ is $\sigma$-discrete;

$(ii)$ there exists a sequence of separated sets $S_n$ such that $S=\bigsqcup\limits_{n=1}^{\infty} S_n$;

$(iii)$ for any infinitesimal sequence of positive numbers $\varepsilon_n$ there exists a sequence of sets $S_n$ such that $S_n$ is $\varepsilon_n$-separated for any $n$ and $S=\bigsqcup\limits_{n=1}^{\infty} S_n$;

$(iv)$ there exists a sequence of closed discrete sets $S_n$ in $X$ such that $S=\bigcup\limits_{n=1}^{\infty} S_n$.
\end{prop}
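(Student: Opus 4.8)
The plan is to establish the cyclic chain of implications $(i)\Rightarrow(iii)\Rightarrow(ii)\Rightarrow(iv)\Rightarrow(i)$, in which three of the four links are almost immediate and all the real work sits in $(i)\Rightarrow(iii)$. Before starting the cycle I would record one auxiliary observation: \emph{every $\varepsilon$-separated set $F$ is closed and discrete}. Discreteness is clear, since the ball of radius $\varepsilon$ about a point of $F$ meets $F$ only in that point. For closedness, suppose $x\in\overline F$; a ball of radius $\varepsilon/2$ has diameter less than $\varepsilon$ and hence contains at most one point of the $\varepsilon$-separated set $F$, while every ball about $x$ meets $F$, so the balls $B(x,\rho)$ with $\rho<\varepsilon/2$ all contain one and the same point $y\in F$, and letting $\rho\to 0$ forces $y=x$, whence $x\in F$. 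With this observation the easy links are routine: $(iii)\Rightarrow(ii)$ follows by applying $(iii)$ to the particular infinitesimal sequence $\varepsilon_n=\frac1n$; $(ii)\Rightarrow(iv)$ follows because each separated $S_n$ is, by the observation, closed and discrete; and $(iv)\Rightarrow(i)$ is trivial, since a closed discrete set is in particular discrete.

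The heart of the argument is $(i)\Rightarrow(iii)$, and I would first show that a single discrete set decomposes nicely. Given a discrete set $D$ and a point $x\in D$, put $r_x=\inf\{d(x,y):y\in D,\ y\ne x\}$ (with $r_x=+\infty$ if $D=\{x\}$); discreteness of $D$ guarantees $r_x>0$. Setting $D_k=\{x\in D: r_x\ge \frac1k\}$, any two distinct points $x,y\in D_k$ satisfy $d(x,y)\ge r_x\ge \frac1k$, so $D_k$ is $\frac1k$-separated; moreover $D_1\subseteq D_2\subseteq\cdots$ and $\bigcup_k D_k=D$, so the differences $E_k=D_k\setminus D_{k-1}$ (with $D_0=\varnothing$) give a disjoint decomposition $D=\bigsqcup_k E_k$ into $\frac1k$-separated sets. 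Applying this to each member of a discrete decomposition $S=\bigcup_n D_n$ and reindexing, I obtain a countable \emph{disjoint} family $\{F_j\}_{j\in\mathbb N}$ with $S=\bigsqcup_j F_j$, each $F_j$ being $\delta_j$-separated for some $\delta_j>0$ (disjointness is arranged by replacing $F_j$ with $F_j\setminus\bigcup_{i<j}F_i$, which stays separated as a subset).

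It remains to redistribute this fixed family into bins matched to the \emph{prescribed} sequence $\varepsilon_n$, and this matching is the one place that needs a genuine, if short, combinatorial idea. Since a $\delta$-separated set is automatically $\varepsilon$-separated whenever $\varepsilon\le\delta$, and since $\varepsilon_m\to 0$ forces each set $M_j=\{m:\varepsilon_m\le\delta_j\}$ to be cofinite and hence infinite, I can build an injection $\sigma:\mathbb N\to\mathbb N$ greedily: having chosen $\sigma(1),\dots,\sigma(j-1)$, pick $\sigma(j)\in M_j$ avoiding these finitely many values, which is possible because $M_j$ is infinite. Finally I set $S_{\sigma(j)}=F_j$ and $S_m=\varnothing$ for $m\notin\sigma(\mathbb N)$; injectivity of $\sigma$ makes this assignment unambiguous and keeps the union disjoint, each nonempty $S_m=F_j$ is $\delta_j$-separated with $\varepsilon_m\le\delta_j$ and therefore $\varepsilon_m$-separated, and $S=\bigsqcup_m S_m$, which is exactly $(iii)$. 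The main obstacle is not any single estimate but the bookkeeping of this last redistribution: one must pass from the intrinsic separation constants $\delta_j$ produced by the discreteness of $S$ to the externally imposed constants $\varepsilon_m$, and the key enabling fact is that $\varepsilon_m\to 0$ leaves infinitely many admissible bins for each piece.
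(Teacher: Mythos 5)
Your proof is correct. The overall skeleton --- a cyclic chain of implications in which three links are immediate, resting on the observation that an $\varepsilon$-separated set is closed and discrete, and on specializing $(iii)$ to $\varepsilon_n=\frac1n$ --- matches the paper's, which runs the cycle in the order $(i)\Rightarrow(ii)\Rightarrow(iii)\Rightarrow(iv)\Rightarrow(i)$. The genuine difference is in how a single discrete set is split into countably many separated pieces. The paper reuses its Proposition~\ref{P2}: inside each discrete piece $A_n$ it produces a dense subset that is a countable union of separated sets (built from maximal $\frac1m$-separated nets via the Teichm\"uller--Tukey lemma) and then observes that a dense subset of a discrete space is the whole space, so $A_n=\bigcup_m A_{n,m}$. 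You instead stratify each discrete piece directly by the isolation radius $r_x=\inf\{d(x,y):y\ne x\}$, taking the level sets $\{x: r_x\ge\frac1k\}$; this is more elementary and explicit, avoids the maximality argument entirely, and is perfectly adequate here since the metric setting is all that is needed. The remaining bookkeeping --- matching the intrinsic separation constants $\delta_j$ to the prescribed infinitesimal sequence $(\varepsilon_n)$ --- is essentially the same in both proofs: the paper picks an increasing subsequence $n_k$ with $\varepsilon_{n_k}<\delta_k$ and pads with empty sets, while you phrase the same idea as a greedy injection into the cofinite sets $M_j=\{m:\varepsilon_m\le\delta_j\}$. Both are valid; your version has the small advantage of not depending on any choice principle beyond what is needed to enumerate the pieces.
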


\begin{proof}[Proof.]
Prove that $(i)$ implies $(ii)$. Since $S$ is $\sigma$-discrete, there is a sequence of discrete sets $A_n$ such that $S=\bigcup\limits_{n=1}^{\infty} A_n$. Using Proposition~\ref{P2} we pick separated sets $A_{n,m}$ such that $\bigcup\limits_{m=1}^{\infty} A_{n,m}$ is dense in $A_n$. As $A_n$ is a discrete set, so every dense set in it equals $A_n$.
Therefore, we have that $A_n=\bigcup\limits_{m=1}^{\infty} A_{n,m}$. Renumber ${\mathbb N}^2=\{(n_k,m_k):k\in \mathbb N\}$. Put $B_k=A_{n_k,m_k}$ and $S_k=B_k\setminus \bigcup\limits_{j<k} B_j$ for any $k\in \mathbb N$. The sequence $(S_k)$ is desired.

Prove that $(ii)$ implies $(iii)$. Let $S=\bigsqcup\limits_{n=1}^{\infty} T_n$, where $T_n$ is $\delta_n$-separated. Fix an infinitesimal sequence of positive numbers $\varepsilon_n$. Since $\varepsilon_n\to 0$, there is $n_1$ such that $\varepsilon_{n_1}<\delta_1$. Analogically there is $n_2>n_1$ such that $\varepsilon_{n_2}<\delta_2$. Continuing the process we have that for any $k\in \mathbb N$ there is $n_k>n_{k-1}$ such that $\varepsilon_{n_k}<\delta_k$. Thus, the set $T_k$ is $\varepsilon_{n_k}$-separated.
Finally, put $S_n=T_k$ if $n=n_k$ for some $k\in \mathbb N$ and $S_n=\emptyset$ otherwise.

Obviously, $(iii)$ implies $(iv)$, because every separated set is closed.

The implication $(iv)\Rightarrow(i)$ is clear.
\end{proof}

\begin{prop}\label{P4}
Let $X$ and $Y$ be metrizable spaces, $Y$ be compact, $M\subseteq X\times Y$ be a $\sigma$-discrete set. Then a projection pr$_X M$ is $\sigma$-discrete.
\end{prop}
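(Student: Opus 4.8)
The plan is to reduce the statement to the projection of a single separated set and then exploit the compactness of $Y$ through total boundedness. Fix metrics $d_X$ and $d_Y$ generating the topologies of $X$ and $Y$, and equip $X\times Y$ with the product metric $d((x,y),(x',y'))=\max\{d_X(x,x'),d_Y(y,y')\}$. Since $M$ is $\sigma$-discrete, the implication $(i)\Rightarrow(ii)$ of Proposition~\ref{P3} lets me write $M=\bigsqcup_{n=1}^{\infty} M_n$, where each $M_n$ is $\varepsilon_n$-separated for some $\varepsilon_n>0$. Because $\mathrm{pr}_X M=\bigcup_{n} \mathrm{pr}_X M_n$ and a countable union of $\sigma$-discrete sets is again $\sigma$-discrete, it suffices to prove that $\mathrm{pr}_X M_n$ is $\sigma$-discrete for each fixed $n$; I will in fact show it is a finite union of $\varepsilon_n$-separated sets.

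Fix $n$ and abbreviate $\varepsilon=\varepsilon_n$. Here I use that $Y$, being a compact metric space, is totally bounded: I choose a finite $\tfrac{\varepsilon}{3}$-net $y_1,\dots,y_k$ in $Y$ (it exists by Proposition~\ref{P1}) and set $V_j=B(y_j,\tfrac{\varepsilon}{3})\setminus\bigcup_{l<j}B(y_l,\tfrac{\varepsilon}{3})$. Then $Y=\bigsqcup_{j=1}^{k} V_j$ and $\mathrm{diam}\,V_j<\varepsilon$ for every $j$. Putting $M_{n,j}=M_n\cap(X\times V_j)$ gives $M_n=\bigsqcup_{j=1}^{k} M_{n,j}$ and hence $\mathrm{pr}_X M_n=\bigcup_{j=1}^{k} \mathrm{pr}_X M_{n,j}$.

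The key step is that each $\mathrm{pr}_X M_{n,j}$ is $\varepsilon$-separated. Indeed, take distinct points $x,x'\in \mathrm{pr}_X M_{n,j}$ and pick $y,y'\in V_j$ with $(x,y),(x',y')\in M_n$. The pairs $(x,y)$ and $(x',y')$ are distinct because $x\ne x'$, so the $\varepsilon$-separation of $M_n$ yields $\max\{d_X(x,x'),d_Y(y,y')\}\ge\varepsilon$; since $d_Y(y,y')\le\mathrm{diam}\,V_j<\varepsilon$, this forces $d_X(x,x')\ge\varepsilon$. Thus $\mathrm{pr}_X M_{n,j}$ is $\varepsilon$-separated, hence discrete, and $\mathrm{pr}_X M_n$ is $\sigma$-discrete as a finite union of separated sets. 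Taking the union over all $n$ exhibits $\mathrm{pr}_X M$ as a countable union of discrete sets, which is the desired conclusion.

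I expect the only genuine obstacle to be conceptual rather than computational: the projection of a discrete (or even separated) set need not be discrete, so one cannot simply project the pieces of $M$ directly. The device that circumvents this is to slice $Y$ into finitely many pieces of diameter smaller than the separation constant, so that any horizontal collapse under $\mathrm{pr}_X$ is absorbed by the $Y$-coordinate and the surviving separation is transferred to the $X$-coordinate. This is precisely where the compactness of $Y$, in the guise of total boundedness, is indispensable.
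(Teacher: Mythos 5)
Your proof is correct, but it takes a genuinely different route from the paper's. The paper also starts from Proposition~\ref{P3}, but uses item $(iv)$ rather than $(ii)$: it writes $M$ as a countable union of \emph{closed} discrete sets and then argues purely topologically that the projection of a closed discrete set is closed and discrete --- every subset of such a set is closed in $X\times Y$, hence (by the Kuratowski closed-projection theorem along the compact factor $Y$) every subset of its projection is closed in $X$, which forces the projection to be closed and discrete. You instead work quantitatively: you take $\varepsilon_n$-separated pieces, slice $Y$ into finitely many cells of diameter less than $\varepsilon_n$ using total boundedness, and transfer the separation constant to the $X$-coordinate. Your key step is sound (distinct $x,x'$ lift to distinct points of $M_n$, and the $Y$-distance is too small to account for the separation, so the $X$-distance must be at least $\varepsilon_n$), and the bookkeeping with countable unions is fine. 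The paper's argument is shorter and produces closed discrete pieces directly; yours is more elementary in that it avoids the closed-projection theorem, makes the role of compactness visible as total boundedness, and in fact establishes the proposition under the weaker hypothesis that $Y$ is merely totally bounded. Your closing remark correctly identifies the obstruction --- projections of discrete or separated sets need not be discrete --- which is precisely the point each proof must overcome with its own use of compactness.
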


\begin{proof}[Proof.]
According to Proposition~\ref{P3} we have that every $\sigma$-discrete set is a union of closed discrete sets. Thus, it is enough to show that projection of closed discrete set is closed and discrete.
Let $M$ be a closed discrete subset of $X$. Show that pr$_X M=S$ is closed and discrete. Let $M'$ be some subset of $M$. Since $M$ is discrete and closed, the set $M'$ is closed in $X\times Y$. By the Kuratowski theorem \cite[Theorem 3.1.16]{EN} we conclude that $S'={\rm pr}_X M'$ is closed in $X$. Therefore, all subsets of $S$ is closed in $X$. Thus, we have that $S$ is closed and discrete.
\end{proof}

\section{Cluster sets of a function with a discrete domain}
For a metric space $(X,d)$, a point $a\in X$, a nonempty set $A\subseteq X$ and $\varepsilon>0$ we denote ${d(a,A)=\inf\limits_{x\in A} d(a,x)}$ and
$$B(a,\varepsilon)=\{x\in X:d(x,a)<\varepsilon\},\qquad B(A,\varepsilon)=\{x\in X:d(x,A)<\varepsilon\},$$
$$B[a,\varepsilon]=\{x\in X:d(x,a)\leq\varepsilon\},\qquad B[A,\varepsilon]=\{x\in X:d(x,A)\leq\varepsilon\}.$$
We also use more exact notations: $B_X(a,\varepsilon)$, $B_X(A,\varepsilon)$, $B_X[a,\varepsilon]$, $B_X[A,\varepsilon]$.
\begin{prop}\label{P5}
Let $X$ be a metrizable space, $F$ be a nonempty closed subset of $X$, $A$ be separable subset of $X$ such that ${F\subseteq \overline{A}}$. Then there exist a sequence of points $x_n\in A$ such that $F$ is equal to the set $\bigcap\limits_{n=1}^{\infty}\overline{\{x_m:m\geq n\}}$ of all limit points of $(x_n)$.

\end{prop}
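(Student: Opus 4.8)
The plan is to enumerate a countable dense subset of $F$ and then build the sequence $(x_n)$ block by block, at each stage choosing points of $A$ that approximate these dense points ever more closely. First I would fix a metric $d$ generating the topology of $X$. Since $A$ is separable and $X$ is metrizable, the closure $\overline A$ is separable, and hence so is its subspace $F$; thus I may pick a sequence $(p_k)_{k\in\mathbb N}$ in $F$ whose range is dense in $F$ (repeating terms if $F$ happens to be finite, so that the indexing runs over all of $\mathbb N$). The inclusion $F\subseteq\overline A$ guarantees that for every $k$ and every $\varepsilon>0$ there is a point of $A$ within distance $\varepsilon$ of $p_k$.

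Next I would construct the sequence in consecutive finite blocks indexed by $n\in\mathbb N$: in the $n$-th block, for each $k\le n$ I choose a point $a\in A$ with $d(a,p_k)<\tfrac1n$ and append these $n$ points to the sequence. Let $(x_m)$ be the resulting sequence. Recall that the set $\bigcap_{n=1}^\infty\overline{\{x_m:m\ge n\}}$ consists exactly of those $p$ for which every neighbourhood contains $x_m$ for infinitely many $m$. Since each fixed $p_k$ is approximated to within $\tfrac1n$ in every block with $n\ge k$, every neighbourhood of $p_k$ contains infinitely many terms of $(x_m)$, so each $p_k$ lies in the limit set. The limit set is closed, and it therefore contains $\overline{\{p_k:k\in\mathbb N\}}=F$, the last equality holding because $\{p_k\}$ is dense in the closed set $F$.

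For the reverse inclusion, note that every term chosen in the $n$-th block satisfies $d(x_m,F)<\tfrac1n$; since the block index of $x_m$ tends to infinity as $m\to\infty$, we obtain $d(x_m,F)\to0$. Consequently, if $p$ is any limit point of $(x_m)$, then $d(p,F)=0$, and as $F$ is closed this forces $p\in F$. Combining the two inclusions yields $F=\bigcap_{n=1}^\infty\overline{\{x_m:m\ge n\}}$. The only genuinely delicate point is this reverse inclusion: the whole construction must force the distance of the terms to $F$ to decay to zero, which is exactly what choosing the $n$-th block at precision $\tfrac1n$ achieves; the density argument for the forward inclusion is then routine.
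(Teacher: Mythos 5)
Your proof is correct. The overall skeleton matches the paper's: both arguments build the sequence in consecutive finite blocks, arrange that every term of the $n$-th block lies in $A$ at distance less than $\tfrac1n$ from $F$ (so the tails sit in $B[F,\tfrac1n]$ and the limit set lands in $F$), and then check that each point of $F$ is approximated within every block from some index on. The difference is in how the finite blocks are produced. The paper reduces to $X=\overline A$, invokes the theorem that a separable metrizable space admits a totally bounded compatible metric, takes finite $\tfrac1{2k}$-nets $S_k$ of the whole space, replaces each net point by a nearby point of $A$, and intersects the result with $B(F,\tfrac1k)$. You instead extract a countable dense sequence $(p_k)$ in $F$ (using that $F$ inherits separability from $\overline A$) and let the $n$-th block consist of one $A$-point within $\tfrac1n$ of each of $p_1,\dots,p_n$; the forward inclusion then needs the extra (easy) observation that the limit set is closed, since a priori it only contains the $p_k$ rather than all of $F$. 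Your route is more elementary in that it avoids the totally bounded remetrization entirely, at the cost of that one additional closedness remark; both proofs are complete and of comparable length.
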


\begin{proof}[Proof.]
Without loss of generality we may assume that $X=\overline{A}$. So, $X$ is a separable metrizable space. Then by \cite[Theorem 4.3.5]{EN} we can choose a totally bounded metric on $X$.
Let $S_k$ be a finite $\frac{1}{2k}$-net in $X$. Pick ${s\in S_k}$. Since $A$ is dense in $X$, there is $a_s \in A$ such that $d(x,a_s)<\frac{1}{2k}$. Put ${A_k=\{a_s:s\in S_k\}}$. Then $A_k$ is a finite $\frac{1}{k}$-net in $X$. Consider finite sets $B_k=A_k\cap B(F,\frac{1}{k})$. Let
$$B_1=\{x_1,\dots,x_{n_1}\}, B_2=\{x_{n_1+1},\dots, x_{n_2}\}, \dots, B_k=\{x_{n_{k-1}+1}, \dots, x_{n_k}\}, \dots$$
Show that the sequence $(x_n)$ is desired.

Since $\{x_m:m>n_k\}=\bigcup\limits_{j>k} B_j\subseteq B(F,\frac{1}{k})$, we have that ${\overline{\{x_m:m>n_k\}}\subseteq B[F,\frac{1}{k}]}$. So,
$$ \bigcap\limits_{m=1}^{\infty}\overline{\{x_n:n\geq m\}}=\bigcap\limits_{k=1}^{\infty}\{x_n:n\geq n_k\}\subseteq \bigcap\limits_{k=1}^{\infty} B[F,\textstyle\frac{1}{k}]=F.$$

Let $x\in F$. Then there exists $y_k\in A_k$ such that $d(x,y_k)<\frac{1}{k}$. Thus, $y_k\in A_k\cap B(F,\frac{1}{k})=B_k$. Then there exists $m_k$ such that $n_{k-1}<m_k\leq n_k$ and $y_k=x_{m_k}$. Therefore, $(y_k)$ is a subsequence of $(x_n)$ and $y_k \to x$. So, $x$ is a limit point of $x_n$.
\end{proof}

Recall, that a multifunction $\Phi:X\multimap Y$ is called \emph{upper continuous} if for any point $x\in X$ and for any open set $V$ with $\Phi(x)\subseteq V$ there exists a neighborhood $U$ of $x$ in $X$ with $\Phi(U)\subseteq V$.

\begin{prop}\label{P6}
Let $X$ be a topological space, $Y$ be a compact, $D\subseteq X$ and $f:D\to Y$. Then $\overline f:\overline D\multimap Y$ is an upper continuous compact-valued multifunction.
\end{prop}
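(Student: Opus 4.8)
The plan is to verify the three constituent properties in turn: that $\overline{f}$ takes compact values, that its domain is exactly $\overline{D}$, and that it is upper continuous. The first two are short consequences of the compactness of $Y$, while the third carries all of the difficulty.

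Compact-valuedness is immediate from the definition. For each $x$ the set $\overline{f}(x)=\bigcap_{U\in\mathcal{U}_x}\overline{f(U\cap D)}$ is an intersection of closed subsets of $Y$, hence closed; being a closed subset of the compact space $Y$, it is compact. For the domain, I would fix $x\in\overline{D}$ and establish $\overline{f}(x)\neq\emptyset$ by the finite intersection property. The sets $\overline{f(U\cap D)}$, $U\in\mathcal{U}_x$, are closed in $Y$, and given finitely many neighborhoods $U_1,\dots,U_n$ their intersection $U=U_1\cap\dots\cap U_n$ is again a neighborhood of $x$; since $x\in\overline{D}$ we have $U\cap D\neq\emptyset$, so $\emptyset\neq\overline{f(U\cap D)}\subseteq\bigcap_{i=1}^{n}\overline{f(U_i\cap D)}$. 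Thus the family has the finite intersection property, and compactness of $Y$ forces its total intersection $\overline{f}(x)$ to be nonempty. Conversely, if $x\notin\overline{D}$ some neighborhood $U$ satisfies $U\cap D=\emptyset$, giving $\overline{f}(x)=\emptyset$. Hence $\mathrm{dom}\,\overline{f}=\overline{D}$, as already noted in the discussion preceding the problem.

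The heart of the argument is upper continuity. Fix $x\in\overline{D}$ and an open set $V\supseteq\overline{f}(x)$; I want a neighborhood of $x$ that $\overline{f}$ carries into $V$. For $U\in\mathcal{U}_x$ put $K_U=\overline{f(U\cap D)}\cap(Y\setminus V)$, a closed subset of the compact space $Y$. Their total intersection is $\overline{f}(x)\cap(Y\setminus V)=\emptyset$, so by compactness some finite subfamily already has empty intersection, say $\bigcap_{i=1}^{n}K_{U_i}=\emptyset$. Setting $W=U_1\cap\dots\cap U_n\in\mathcal{U}_x$ we have $W\cap D\subseteq U_i\cap D$, hence $\overline{f(W\cap D)}\subseteq\overline{f(U_i\cap D)}$ for each $i$, and therefore $\overline{f(W\cap D)}\cap(Y\setminus V)\subseteq\bigcap_{i=1}^{n}K_{U_i}=\emptyset$, that is, $\overline{f(W\cap D)}\subseteq V$. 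To conclude, I would shrink $W$ to an open neighborhood $U\subseteq W$ of $x$. Then $U$ is a neighborhood of each of its points, so for every $y\in U$ we get $\overline{f}(y)\subseteq\overline{f(U\cap D)}\subseteq\overline{f(W\cap D)}\subseteq V$, whence $\overline{f}(U)=\bigcup_{y\in U}\overline{f}(y)\subseteq V$, which is exactly upper continuity.

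The main obstacle is the localization step producing $W$: one must convert the pointwise containment $\overline{f}(x)\subseteq V$, which a priori only controls the infinite intersection defining $\overline{f}(x)$, into a single neighborhood on which the closure of the image already lies in $V$. Compactness of $Y$ is precisely what powers this passage, through the finite intersection property applied to the closed remainders $K_U$; without it the infinite intersection could shrink below $V$ without any finite truncation doing so.
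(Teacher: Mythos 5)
Your proof is correct and follows essentially the same route as the paper: compactness of $Y$ gives closedness (hence compactness) of the values, and the finite intersection property applied to the closed sets $\overline{f(U\cap D)}$ (equivalently, your complements $K_U$) yields a finite subfamily, whose corresponding neighborhoods intersect to the required $W$, shrunk to an open $U_0$. Your additional verification that $\mathrm{dom}\,\overline{f}=\overline{D}$ is done in the paper's introduction via the Kuratowski closed-projection theorem rather than the finite intersection property, but both arguments are standard and equivalent here.
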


\begin{proof}[Proof.]
For any $x\in \overline{D}$ we have that $\overline{f}(x)=\bigcap\limits_{U\in \mathcal{U}_x}\overline {f(U)}$. So, $\overline{f}(x)$ is a closed subset of a compact space $Y$. Therefore, $\overline{f}(x)$ is compact.

Prove that $\overline f$ is upper continuous. Let $x_0\in \overline D$ and let $V$ be an open neighborhood of $\overline f(x_0)$.
Put $\mathcal{F}=\{\overline {f(U\cap D)}: U \mbox{ is a neighborhood of } x_0\}$.
Then $\overline f(x_0)=\bigcap \mathcal{F}\subseteq V$. Since $Y$ is compact, there exist $F_1,F_2,\dots,F_n\in \mathcal{F}$ such that $\bigcap\limits_{k=1}^{n}F_k\subseteq V$. Choose $U_k\in \mathcal{U}_{x_0}$ such that $F_k=\overline {f(U_k)}$. Put $U_0={\rm int} \bigcap\limits_{k=1}^{n} U_k$. Then $U_0$ is a neighborhood of $x_0$ and $\overline{f(U_0\cap D)}\subseteq \bigcap\limits_{k=1}^{n}{\overline {f(U_k)}}=\bigcap\limits_{k=1}^{n} F_k\subseteq V$.
Therefore, for any $x\in U_0\cap \overline D$ we have that $\overline{f}(x)=\bigcap\limits_{U\in \mathcal{U}_x}\overline {f(U)}\subseteq \overline{f(U_0)}\subseteq V$. So, $\overline{f}$ is upper continuous at $x_0$.
\end{proof}

\begin{thm}\label{T1}
Let $X$ be a metrizable space, $Y$ be a dense subset of a metrizable compact $\overline{Y}$, $L$ be a closed nowhere dense subset of $X$, $D=X\setminus L$ and $\Phi:L \multimap \overline{Y}$ be an upper continuous compact-valued multifunction. Then there is a set $A$ such that $A$ is discrete in $D$, $\overline{A}\setminus A=L$, and there exists a function $f:A\to Y$, such that $\overline{f}(x)=\Phi(x)$ for any $x\in L$.
\end{thm}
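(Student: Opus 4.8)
The plan is to lift the whole problem into the product $Z=X\times\overline Y$ and encode it in the single closed set $P=\mathrm{gr}\,\Phi$. After disposing of the trivial case $L=\varnothing$, fix compatible metrics $d$ on $X$ and $\rho$ on $\overline Y$. Since $\Phi$ is upper continuous, compact-valued and $L$ is closed, a routine sequential argument (separate the compact set $\Phi(x)$ from a putative extra limit value by an open set $V$, then apply upper continuity) shows $P$ is closed in $Z$; moreover $P\subseteq L\times\overline Y$, and as $L$ is nowhere dense and $Y$ is dense in $\overline Y$, the ``feasible'' set $R=D\times Y$ is dense in $Z$ and disjoint from $L\times\overline Y\supseteq P$. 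The goal then reduces to producing a set $M\subseteq R$ that is the \emph{graph of a function} and satisfies $\overline M\setminus M=P$. Indeed, set $A=\mathrm{pr}_X M$ and let $f\colon A\to Y$ be the function with $\mathrm{gr}\,f=M$. Since $M$ is closed and discrete in $D\times\overline Y$ and $\overline Y$ is compact, the projection argument used in Proposition~\ref{P4} gives that $A$ is closed and discrete in $D$, while $\mathrm{pr}_X(\overline M\setminus M)=\mathrm{pr}_X P=\mathrm{dom}\,\Phi=L$ yields $\overline A\setminus A=L$. Finally, using $\mathrm{gr}\,\overline f=\overline{\mathrm{gr}\,f}=\overline M=M\cup P$ together with the fact that no point of $M\subseteq D\times Y$ lies over a point of $L$, we obtain $\overline f(x)=\{y:(x,y)\in P\}=\Phi(x)$ for every $x\in L$, as required.

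To build a first candidate $M_0$ I would run the non-separable analogue of Proposition~\ref{P5} inside $Z$, using distance-to-$L$ annuli so that different scales occupy disjoint $X$-ranges. For each $k$ put $R_k=\{x\in X:\tfrac1{k+1}\le d(x,L)<\tfrac1k\}\subseteq D$ and consider the ``thickened graph over $R_k$'', namely the set of all $(x,y)$ with $x\in R_k$ for which there is $x'\in L$ with $d(x,x')<\tfrac2k$ and $\rho(y,\Phi(x'))<\tfrac1k$. Using Proposition~\ref{P1}, choose a $\tfrac1{3k}$-separated $\tfrac1{3k}$-net of this set, shift each point into the dense set $R$ by less than $\tfrac1{3k}$, and let $M_0$ be the union over $k$. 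Separation makes each layer closed and discrete, and the inclusion into the $k$-th annulus forces any point off $P$ to have a neighbourhood meeting only finitely many layers, each in finitely many points; hence $M_0$ is closed discrete in $Z\setminus P$ and $M_0\cap P=\varnothing$. Upper continuity of $\Phi$ guarantees that every limit of $M_0$-points lands in $P$, and conversely, approaching any $p=(x,y)\in P$ along $D$-points $x_i\to x$ (and using $y\in\Phi(x)$, so $\rho(y,\Phi(x))=0$) shows $P\subseteq\overline{M_0}$. Thus $\overline{M_0}\setminus M_0=P$, exactly as in Proposition~\ref{P5} but with $\sigma$-discrete nets in place of finite ones.

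The \textbf{main obstacle} is that $\mathrm{pr}_X$ need not be injective on $M_0$: at a fixed scale several net points may share an $X$-coordinate, their $\overline Y$-coordinates being $\tfrac1{3k}$-apart, so $M_0$ is not yet a graph. I would repair this by a \emph{locally finite} greedy perturbation of the $X$-coordinates. Because each layer is separated and $\overline Y$ is compact, every fibre of $\mathrm{pr}_X$ on a layer is finite and only finitely many $M_0$-points meet any small ball; so, moving each $k$-th-scale point's $X$-coordinate inside $D$ by less than the annulus width $\tfrac1{k(k+1)}$, one can recursively choose these displacements pairwise distinct, the collisions to be avoided at each step being finitely many and the available nearby points in $D$ infinite. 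This produces a graph $M$ with $\mathrm{pr}_X$ injective; the displacements stay inside $R_k$ so there are no collisions across scales (the annuli are $X$-disjoint), and since they tend to $0$ the identity $\overline M\setminus M=P$ is preserved. The one genuinely delicate point, to be treated as a separate case, is an \emph{isolated point} of $X$ close to $L$: it cannot be split, so over it I retain a single value and note that the full set $\Phi(x)$ for the relevant $x\in L$ is still recovered from the \emph{sequence} of such nearby points across the scales $k$, whose assigned values are arranged to be dense in $\Phi(x)$. Carrying out this perturbation while simultaneously maintaining injectivity and the closure identity is the crux; everything else follows from Propositions~\ref{P1}, \ref{P4}, \ref{P5} and the description of the cluster multifunction as the closure of the graph.
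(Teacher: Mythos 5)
Your top-level reduction is sound and is in fact the same mechanism the paper exploits: since $\mathrm{gr}\,\overline f=\overline{\mathrm{gr}\,f}$, it suffices to build a function-graph $M\subseteq D\times Y$, closed and discrete in $D\times\overline Y$, with $\overline M\setminus M=\mathrm{gr}\,\Phi$; the identifications $\overline A\setminus A=L$ and $\overline f(x)=\Phi(x)$ then follow exactly as you say. The gap is in the construction of $M_0$. Your thickened graph over the annulus $R_k=\{x: \frac1{k+1}\le d(x,L)<\frac1k\}$ only admits witnesses $x'\in L$ with $d(x,x')<\frac2k$, i.e.\ within roughly $2\,d(x,L)$ of $x$. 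But a point $x\in D$ close to $x_0\in L$ may be far closer to \emph{another} part of $L$ than to $x_0$, so it sits in a very deep annulus whose admissible witnesses exclude $x_0$; and upper continuity only gives $\Phi(x')\subseteq B(\Phi(x_0),\varepsilon)$, never that $\Phi(x')$ comes near every point of $\Phi(x_0)$. Concretely, take $X=\mathbb R$, $L=\{0\}\cup\{\pm\frac1n:n\in\mathbb N\}$, $\overline Y=Y=[0,1]$, $\Phi(0)=[0,1]$ and $\Phi(\pm\frac1n)=\{0\}$; this $\Phi$ is upper continuous and compact-valued and $L$ is closed and nowhere dense. Every $x\in D$ with $|x|<\frac12$ lies in some interval $(\frac1{n+1},\frac1n)$, so $d(x,L)\le\frac1{2n(n+1)}$ while $d(x,0)>\frac1{n+1}$; hence $x\in R_k$ with $k\approx 2n^2$, the only witnesses within $\frac2k$ are points $\pm\frac1m$, and the thickened graph forces $\rho(y,0)<\frac1k$. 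Thus no point of $M_0$ approaches $(0,1)\in\mathrm{gr}\,\Phi$, and $\mathrm{gr}\,\Phi\subseteq\overline{M_0}$ fails. The claim ``approaching any $p=(x,y)\in P$ along $D$-points $x_i\to x$ shows $P\subseteq\overline{M_0}$'' silently assumes one can choose $x_i\in D$ with $d(x_i,x)\lesssim d(x_i,L)$, which the example refutes.

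The paper avoids this by anchoring the construction to the space $L$ rather than to the level sets of $d(\cdot,L)$: it takes a $\sigma$-discrete set $M$ dense in $\mathrm{gr}\,\Phi$, projects it to a $\sigma$-discrete dense subset $S=\bigsqcup_n S_n$ of $L$, and attaches to each $s\in S_n$ a sequence $x_{s,k}\in D$ converging to $s$ (chosen recursively to be pairwise distinct, which also disposes of your injectivity and isolated-point worries) together with values $y_{s,k}\in Y$ whose limit set is exactly $\Phi(s)$ (Proposition~\ref{P5}). Density of $M$ in $\mathrm{gr}\,\Phi$ then supplies, near any $(x_0,y_0)\in\mathrm{gr}\,\Phi$, a pair $(s,y)$ with $s\in S$, and the attached sequence realizes the lower inclusion $\Phi(x_0)\subseteq\overline f(x_0)$. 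If you reorganize your construction so that the scales are indexed by the separated pieces $S_n$ of a dense subset of $L$ and each approximating point is tied to a specific $s\in S$ it converges to, the rest of your argument goes through.
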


\begin{proof}[Proof.]
Using Proposition~\ref{P2} for a metrizable space ${\rm gr} F$ we conclude that there exists a $\sigma$-discrete set $M$, which is dense in ${\rm gr} F$. Then $M$ is $\sigma$-discrete in $X\times \overline{Y}$ too. By Proposition~\ref{P4} we have that the projection $S={\rm pr}_X M$ is $\sigma$-discrete in $X$. Furthermore,
$$L={\rm pr}_X {\rm gr} \Phi \subseteq {\rm pr}_X \overline{M} \subseteq \overline{{\rm pr}_X M}=\overline{S}.$$
Then $S$ is a dense subset of $L$. By Theorem~\ref{P3} there exists a sequence of $\frac{3}{n}$-separated sets $S_n$ such that $S=\bigsqcup\limits_{n=1}^{\infty} S_n$. Set $T_n=\bigsqcup\limits_{m<n} S_m$ for any $n\in \mathbb{N}$. Now we construct families of points $x_{s,k}$ and $y_{s,k}$ where $s\in S$, $k\in \mathbb{N}$, satisfying the following conditions:
$$x_{s,k}\in D \mbox{ for any } s\in S \mbox{ and } k\in \mathbb{N}; \leqno (1)$$
$$\textstyle d(x_{s,k},s)<\frac{1}{n} \mbox{ for any } n\in \mathbb{N}, s\in S_n \mbox{ and } k\in \mathbb{N}; \leqno (2)$$
$$x_{s,k}\to s \mbox{ as } k\to \infty \mbox{ for any } s\in S; \leqno (3)$$
$$x_{s,k}\neq x_{t,j} \mbox{ if } (s,k)\neq (t,j); \leqno (4)$$
$$y_{s,k}\in Y \mbox{ for any } s\in S \mbox{ and } k\in \mathbb{N}; \leqno (5)$$
$$\Phi (s) \mbox{ is the set of limit points of sequence } {(y_{s,k})}_{k=1}^{\infty} \mbox{ for any } s\in S; \leqno (6)$$
$$d(y_{s,k},\Phi(s))<\textstyle\frac{1}{n} \mbox{ for any } n\in \mathbb N, s\in S_n \mbox{ and } k\in \mathbb{N}. \leqno (7)$$
Fix $s\in S_1$ and put $D_s=B(s,1)\cap D$. Since $\overline{D}=X$, $s\in \overline{D}_s$. Thus, $s\in \overline{D}_s\setminus D_s$. Therefore, there exists a sequence of different points $x_{s,k}\in D_s$, that tends to $s$. So, we have that conditions (1) -- (3) hold. Since a family of balls $(B(s,1))_{s\in S_1}$ is discrete, then a family of sets $(D_s)_{s\in S_1}$ is discrete too. Thus, the condition (4) holds.

Assume that points $x_{s,k}$ are constructed for some number $n>1$ and for any $s\in T_n$ and $k\in \mathbb{N}$, so that conditions (1) -- (4) hold. Fix $s\in S_n$ and put
$$\textstyle D_s=B(s,\frac{1}{n})\cap D\setminus \{x_{t,j}:t\in T_n, j\in \mathbb N\}.$$
Prove that $s\in \overline{D}_s$. Let $A_t=\{x_{t,j}:j\in \mathbb{N}\}$ for any $t\in T_n$. Since $x_{t,j}\to t$, $\overline{A}_t=A_t\cup \{t\}$. So $s\notin \overline{A}_t$ for any $t\in T_n$. For any $m<n$ a family of balls $(B(t,\frac{1}{m}))_{t\in S_m}$ is discrete. Besides, by the condition (2) we have that $A_t\subseteq B(t,\frac{1}{m})$ if $m<n$ and $t\in S_m$. Then a family $(A_t)_{t\in S_m}$ is discrete as $m<n$. Since $T_n=\bigsqcup\limits_{m<n} S_m$, we have that a family $(A_t)_{t\in T_n}$ is locally finite. In this case, $\overline{\bigcup\limits_{t\in T_n} A_t}=\bigcup\limits_{t\in T_n} \overline{A}_t\not \ni s$.

So, a set
$$\textstyle U=X\setminus\bigcup\limits_{t\in T_n} A_t=X\setminus \{x_{t,j}:t\in T_n, j\in \mathbb N\}$$
is a neighborhood of $s$. Since $\overline{D}=X$, $s\in \overline{B(s,\frac{1}{n})\cap D}$. Thus, ${s\in \overline{B(s,\frac{1}{n})\cap D\cap U}=\overline{D}_s}$. Next, taking into account that $s\notin D_s$, we have that $s\in \overline{D}_s\setminus D_s$. Therefore, there exists a sequence of different points ${x_{s,k}\in D_s}$ that tends to $s$. So, the conditions (1) -- (3) hold. Besides, ${x_{s,k}\neq x_{t,j}}$ as $t\in T_n$, $k,j\in \mathbb N$. Since a family of balls $(B(s,\frac{1}{n}))_{s\in S_n}$ is discrete, we have that  a family $(D_s)_{s\in S_n}$ is discrete too. Therefore, the condition (4) holds.

Construct points $y_{s,k}$. Fix $n\in \mathbb N$ and $s\in S_n$. A set $B_{\overline{Y}}(\Phi(s),\frac1n)$ is an open neighborhood of a set $\Phi(s)$ in $\overline{Y}$ and $Y$ is dense in $\overline{Y}$. Then for a set
$Y_s=B_{Y}(\Phi(s),\frac1n)$
we have that $Y_s=B_{\overline{Y}}(\Phi(s),\frac1n)\cap Y$. Then ${\Phi(s)\subseteq \overline{Y}_s}$. Since every metrizable compact is a hereditarily separable, we have that  the set $Y_s$ is separable. Using Proposition~\ref{P5} with $F=\Phi(s)$ and $A=Y_s$ we conclude that there exists a sequence of points $y_{s,k}\in Y_s$ such that $\Phi(s)$ is the set of limit points of $(y_{s,k})_{k=1}^{\infty}$. Therefore, the conditions (5)~--~(7) hold.

Put $A=\{x_{s,k}:s\in S,k\in \mathbb N\}$ and check that $\overline{A}\setminus A=L$. Firstly, by (3) we have that $S\subseteq \overline{A}$. Thus, $L=\overline S\subseteq \overline{A}$. Secondly, by (1) we have that $L\cap A=\emptyset$. Thus, $L\subseteq \overline{A}\setminus A$.
Prove inverse inclusion. For any $s\in S$ put $A_s=\{x_{s,k}:k\in \mathbb N\}$.
By (3) we have that $\overline{A}_s=A_s\cup \{s\}$. Since a family of balls $(B(s,\frac{1}{n}))_{s\in S_n}$ is discrete, a family $(A_s)_{s\in S_n}$ is discrete too. So, $\overline{\bigcup\limits_{s\in S_n} A_s}=\bigcup\limits_{s\in S_n} \overline{A_s}=\bigcup\limits_{s\in S_n}(A_s\cup \{s\})=S_n\cup (\bigcup\limits_{s\in S_n} A_s)$ for every number $n$. Besides, by (2) we have that for any $m\geq n$ and $s\in S_m$ we have an inclusion $A_s\subseteq B(s,\frac{1}{m})\subseteq B[L,\frac{1}{n}]$.
Hence, $\overline{\bigcup\limits_{m\geq n}\bigcup\limits_{s\in S_m} A_s}\subseteq B[L,\frac{1}{n}]$ for any number $n$. So,

$$\textstyle \overline{A}=\overline{\bigcup\limits_{m<n}\bigcup\limits_{s\in S_m} A_s \cup \bigcup\limits_{m\geq n}\bigcup\limits_{s\in S_m} A_s}=\bigcup\limits_{m<n}\overline{\bigcup\limits_{s\in S_m} A_s} \cup \overline{\bigcup\limits_{m\geq n}\bigcup\limits_{s\in S_m} A_s}$$
$$\textstyle \subseteq \bigcup\limits_{m<n}(S_m\cup\bigcup\limits_{s\in S_m} A_s)\cup B[L,\frac{1}{n}] \subseteq B[L,\frac{1}{n}] \cup A.$$
Therefore, $\overline{A}\setminus A\subseteq B[L,\frac{1}{n}]$ for any $n$. Hence, $\overline{A}\setminus A\subseteq \bigcap\limits_{n=1}^{\infty} B[L,\frac{1}{n}]=L$.

Define a function $f:A\to Y$ by
$$f(x_{s,k})=y_{s,k} \mbox{ for }  s\in S \mbox{ and } k\in \mathbb N$$
and show that $f$ is desired. Fix $s_0\in L$ and find out that $\overline{f}(s_0)=\Phi(s_0)$.

Prove that $\overline{f}(s_0)\subseteq \Phi(s_0)$. Pick $\varepsilon>0$ and show that $\overline{f}(s_0)\subseteq B(\Phi(s_0),2\varepsilon)$. Since $\Phi$ is upper continuous, then there exists a neighborhood $U_0$ of $s_0\in X$ such that
$$\Phi(U_0\cap L)\subseteq B(\Phi(s_0),\varepsilon). \leqno (8)$$
Choose $\delta>0$ such that $B(s_0,2\delta)\subseteq U_0$ and find a number $n_0$ such that $\frac{1}{n_0}<\delta$ and $\frac{1}{n_0}<\varepsilon$. Put $U_1=B(s_0,\delta)$ and prove that
$$d(f(x),\Phi(s_0))<2\varepsilon \mbox{ as } x\in (A\setminus T_{n_0})\cap U_1. \leqno (9)$$
Let $x\in A\setminus T_{n_0}=\bigsqcup\limits_{n\geq n_0} S_{n}$ such that $x\in U_1$. Then $x=x_{s,k}$ for some $n\geq n_0$, $k\in \mathbb N$ and $s\in S_n$. Since $x_{s,k}\in U_1$, then ${d(x_{s,k},s_0)<\delta}$. Next, by (2) we have that $d(x_{s,k},s)<\frac{1}{n}\leq\frac{1}{n_0}<\delta$. Thus, ${d(s_0,s)<2\delta}$. So, ${s\in B(s_0,2\delta)\subseteq U_0}$. Therefore, $s\in U_0\cap L$. By (8) we have that $\Phi(s)\subseteq B(\Phi(s_0),\varepsilon)$. But $f(x)=y_{s,k}$. By (7) we have that ${d(f(x),\Phi(s))=d(y_{s,k},\Phi(s))<\frac{1}{n}\leq \frac{1}{n_0}<\varepsilon}$.

Thus, there is $y\in \Phi(s)$ such that $d(f(x),y))<\varepsilon$. But $d(y,\Phi(s_0))<\varepsilon$, because $y\in \Phi(s)\subseteq B(\Phi(x_0),\varepsilon)$. Therefore, $d(f(x),\Phi(s_0))<2\varepsilon$. So, (9) holds.

As was remarked previously, $(A_s)_{s\in T_{n_0}}$ is a locally finite. Besides, by (3) we have that $\overline{A}_s=A_s\cup \{s\}$. Thus, by (4) we have that $(\overline{A}_s)_{s\in T_{n_0}}$ is disjoint. So, we have that $(A_s)_{s\in T_{n_0}}$ is discrete.

Let us show that there is a neighborhood $U_2\subseteq U_1$ of a point $s_0\in X$ such that
$$d(f(x),\Phi(s_0))< 2\varepsilon \mbox{ as } x\in T_{n_0}\cap U_2. \leqno (10)$$
Firstly, consider the case when $s_0\notin T_{n_0}$. Then for any $t\in T_{n_0}$ we have that $s_0\notin \{t\}\cup A_t=\overline{A}_t$. Thus, $s_0\notin \bigcup\limits_{t\in T_{n_0}}\overline{A}_t=\overline{\bigcup\limits_{t\in T_{n_0}} A_t}$.
Then there is a neighborhood $U_2$ of $s_0$ such that $U_2\cap A_t=\emptyset$ as $t\in T_{n_0}$. Therefore, the condition (10) holds.

Now consider the case when $s_0\in T_{n_0}$. Since $s_0\in A_{s_0}\cup \{s_0\}=\overline{A}_{s_0}$ and a family $(A_s)_{s\in T_{n_0}}$ is discrete, we have that there is a neighborhood $U_3\subseteq U_1$ of a point $s_0$ such that $U_3\cap A_s=\emptyset$ as $s\in T_{n_0}\setminus \{s_0\}$. Next, by (6) we have that $\Phi (s_0)$ is a set of limit points of a sequence $\mathop{(y_{s_0,k})}_{k=1}^{\infty}$. Thus, there is $k_0\in \mathbb N$ such that
$$d(y_{s_0,k},\Phi(s_0))<2\varepsilon \mbox{ for any } k\geq k_0. \leqno (11)$$
Put $U_2=U_3\setminus \{x_{s_0,k}:k<k_0\}$. Then we have that ${U_2\cap T_{n_0}\subseteq \{x_{s_0,k}:k\geq k_0\}}$. Thus, (10) implies (11).

Therefore, there is a neighborhood $U_2\subseteq U_1$ of $s_0$ such that a condition (10) hold. Thus, by (9) and (10) we have that $d(f(x),\Phi(s_0))<2\varepsilon$ as $x\in A\cap U_2$. So $f(A\cap U_2)\subseteq B(\Phi(s_0),2\varepsilon)$. Hence, we have that
$$\textstyle \overline{f}(s_0)=\bigcap\limits_{U\in \mathcal{U}_{s_0}}\overline{f(A\cap U)}\subseteq \overline{f(A\cap U_2)}\subseteq B[\Phi(s_0),2\varepsilon]$$
for any $\varepsilon>0$. Since $\Phi(s_0)$ is closed, we have that
$$\textstyle \overline{f}(s_0)\subseteq \bigcap\limits_{\varepsilon>0}B[\Phi(s_0),2\varepsilon]=\Phi(s_0).$$

Let us find out that $\Phi(s_0)\subseteq \overline{f}(s_0)$. It is sufficient to show that ${\Phi(s_0)\subseteq \overline{f(U\cap A)}}$ for any open neighborhood $U$ of $s_0$. Fix some open neighborhood $U_0$ of $s_0$ and a point $y_0\in \Phi(s_0)$. Show that $y_0\in \overline{f(U_0\cap A)}$. Pick an open neighborhood $V_0$ of $y_0$ and show that $V_0\cap f(U_0\cap A)\neq \emptyset$. Since $(s_0,y_0)\in {\rm gr} \Phi=M $, $(U_0\times V_0)\cap M\neq \emptyset$. Thus, there is a point $(s,y)\in M \cap (U_0\times V_0)$. Then $s\in {\rm pr}_X M=S$, $s\in U_0$ and $y\in \Phi(s)\cap V_0$. Next, by (3) we have that there exists $k_0\in \mathbb N$ such that $x_{s,k}\in U_0$ as $k\geq k_0$. By (6) we have that $y$ is a limit point of a sequence  $\mathop{(y_{s,k})}_{k=1}^{\infty}$. Thus, there exists $k\geq k_0$ such that $y_{s,k}\in V_0$. So, we have that $x_{s,k}\in U_0\cap A$ and $f(x_{s,k})=y_{s,k}\in V_0\cap f(U_0\cap A)$. Hence, $V_0\cap f(U\cap A)\neq \emptyset$, and thus, $y_0\in \overline{f(U_0\cap A)}$.
\end{proof}

\section{The main result}
\begin{lem}\label{L1}
Let $X$ be a metrizable topological space, $Y$ be a dense subspace of a metrizable compact space $\overline{Y}$, $L$ be a closed nowhere dense subset of $X$, ${\Phi:L\multimap \overline{Y}}$ be an upper continuous compact-valued multifunction and $D\subseteq X\setminus L$ such that $L\subseteq \overline{D}$. Then there exists a function $f:D\to Y$ such that $\overline{f}(x)\subseteq \Phi(x)$.
\end{lem}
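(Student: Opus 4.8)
The plan is to define $f$ pointwise, assigning to each $x\in D$ a value in $Y$ that lies close to $\Phi$ evaluated at a point of $L$ near $x$; since $\Phi$ is upper continuous and $Y$ is dense in $\overline Y$, such values can be chosen so that they accumulate only inside $\Phi(s_0)$ as $x$ approaches a point $s_0\in L$. I would fix compatible metrics $d$ on $X$ and on $\overline Y$. Because $L$ is closed and $D\subseteq X\setminus L$, we have $d(x,L)>0$ for every $x\in D$.

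First I would make the selection. For each $x\in D$, using that $d(x,L)=\inf_{p\in L}d(x,p)>0$, pick $p(x)\in L$ with $d(x,p(x))<2\,d(x,L)$. The set $\Phi(p(x))$ is a nonempty subset of $\overline Y$, so the open set $B_{\overline Y}(\Phi(p(x)),d(x,L))$ meets the dense subspace $Y$; I would then choose $f(x)$ in $B_{\overline Y}(\Phi(p(x)),d(x,L))\cap Y$. Thus $f(x)\in Y$ and $d(f(x),\Phi(p(x)))<d(x,L)$, which defines a function $f:D\to Y$.

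Next I would verify the inclusion $\overline f(s_0)\subseteq\Phi(s_0)$ for an arbitrary $s_0\in L$. Fix $\varepsilon>0$. By upper continuity of $\Phi$ there is a neighborhood $U$ of $s_0$ with $\Phi(U\cap L)\subseteq B(\Phi(s_0),\varepsilon)$. Choose $\delta>0$ with $\delta<\varepsilon$ and $B(s_0,3\delta)\subseteq U$, and consider any $x\in D\cap B(s_0,\delta)$. Then $d(x,L)\le d(x,s_0)<\delta$, hence $d(x,p(x))<2\delta$ and $d(s_0,p(x))\le d(s_0,x)+d(x,p(x))<3\delta$, so $p(x)\in U\cap L$ and $\Phi(p(x))\subseteq B(\Phi(s_0),\varepsilon)$. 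Combining $d(f(x),\Phi(p(x)))<d(x,L)<\delta<\varepsilon$ with $\Phi(p(x))\subseteq B(\Phi(s_0),\varepsilon)$ yields $d(f(x),\Phi(s_0))<2\varepsilon$. Therefore $f(D\cap B(s_0,\delta))\subseteq B(\Phi(s_0),2\varepsilon)$, whence $\overline f(s_0)\subseteq\overline{f(D\cap B(s_0,\delta))}\subseteq B[\Phi(s_0),2\varepsilon]$. Since $\varepsilon>0$ is arbitrary and $\Phi(s_0)$ is closed, $\overline f(s_0)\subseteq\bigcap_{\varepsilon>0}B[\Phi(s_0),2\varepsilon]=\Phi(s_0)$.

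I do not expect a serious obstacle here: unlike Theorem~\ref{T1}, which forces \emph{equality} on the full complement $X\setminus L$, this lemma only asks for the inclusion on an arbitrary domain $D$ with $L\subseteq\overline D$, and this is exactly the ``easy'' direction obtained by keeping the values of $f$ glued to $\Phi$. The only points requiring care are that the approximate nearest-point selection $p(x)$ tends to $s_0$ whenever $x\to s_0$ (guaranteed by $d(x,p(x))<2\,d(x,L)$), and the interplay of the two error terms $\varepsilon$ (from upper continuity) and $d(x,L)$ (from density of $Y$ in $\overline Y$), both of which are controlled by the single radius $\delta$.
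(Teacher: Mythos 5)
Your proposal is correct and follows essentially the same route as the paper's proof: an approximate nearest-point selection $p(x)\in L$ with $d(x,p(x))<2d(x,L)$, a value $f(x)\in B_{\overline Y}(\Phi(p(x)),d(x,L))\cap Y$ chosen by density, and the same upper-continuity/triangle-inequality estimate near each $s_0\in L$ (the paper tracks constants so as to land in $B[\Phi(a),\varepsilon]$ rather than $B[\Phi(s_0),2\varepsilon]$, but this is immaterial).
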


\begin{proof}[Proof.]
Let $x\in D$ and $r(x)=d(x,L)=\inf\limits_{y\in L} d(x,y)>0$. Then there exists $h(x)\in L$ such that $d(x,h(x))<2r(x)$. Since $Y$ is dense in $\overline{Y}$, there is a point $f(x)\in B_{\overline{Y}}(\Phi(h(x)),r(x))\cap Y$. Let us show that $f:D\to Y$ is desired.

Let $a\in L$, $\varepsilon>0$ and $V=B_{\overline{Y}}(\Phi(a),\frac{\varepsilon}{2})$. Since $\Phi$ is upper continuous, then there is $\delta>0$ such that $\delta<\varepsilon$ and for a neighborhood $U_0=B_X(a,\delta)$ we have that $\Phi(U_0\cap L)\subseteq V$. Let $U_1=B_X(a,\frac{\delta}{3})$. Pick $x\in U_1\bigcap D$. Then $r(x)=d(x,L)\leq d(x,a)<\frac{\delta}{3}$. Therefore, $d(x,h(x))<2r(x)<\frac{2\delta}{3}$. Thus, $d(a,h(x))<d(a,x)+d(x,h(x))<\frac{\delta}{3}+\frac{2\delta}{3}=\delta$. Hence, $h(x)\in U_0$. Besides, ${r(x)<\frac{\delta}{3}<\frac{\varepsilon}{3}<\frac{\varepsilon}{2}}$. Thus, $f(x)\in B_{\overline{Y}}(\Phi(h(x)),r(x))\subseteq B_{\overline{Y}}(\Phi(h(x)),\frac{\varepsilon}{2})$. Therefore, there exists $y\in \Phi(h(x))$ such that $d(f(x),y)<\frac{\varepsilon}{2}$. Next, $y\in \Phi(h(x))\subseteq \Phi(U_0\cap D)\subseteq V=B_{\overline{Y}}(\Phi(a),\frac{\varepsilon}{2})$. Then there exists $z\in \Phi(a)$ such that $d(y,z)<\frac{\varepsilon}{2}$. In this case, we have that $d(f(x),z)<d(f(x),y)+d(y,z)<\varepsilon$. Thus, $f(x)\in B_Y(\Phi(a),\varepsilon)$ for any $x\in U_1\cap D$. Hence, $\overline{f(U_1\cap D)}\subseteq \overline{B_{\overline{Y}}(\Phi(a),\varepsilon)}\subseteq B_{\overline{Y}}[\Phi(a),\varepsilon]$. Thus, $$\overline{f}(a)=\bigcap\{\overline{f(U\cap D)}: U\in \mathcal{U}_a\}\subseteq \overline{f(U_1\cap D)}\subseteq B_{\overline{Y}}[\Phi(a),\varepsilon]$$
for any $\varepsilon>0$. Therefore,
$\overline{f}(a)\subseteq \bigcap\limits_{\varepsilon>0}B_{\overline{Y}}[\Phi(a),\varepsilon]=\Phi(a)$,
because a set $\Phi(a)$ is closed.
\end{proof}

\begin{lem}\label{L2}
Let $(M,\leq)$ be a directed set, ${(A_m)}_{m\in M}$ and ${(B_m)}_{m\in M}$ be decreasing sequences of sets. Then $\bigcap\limits_{m\in M}(A_m\cup B_m)\subseteq (\bigcap\limits_{m\in M}A_m)\cup (\bigcap\limits_{m\in M}B_m)$.
\end{lem}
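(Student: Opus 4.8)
The plan is to prove the inclusion by a pointwise contrapositive argument, with the directedness of $M$ doing all the real work. First I would fix the reading of "decreasing" for families indexed by a directed set: for $(A_m)$ and $(B_m)$ this means $A_{m'}\subseteq A_m$ and $B_{m'}\subseteq B_m$ whenever $m\leq m'$. This monotonicity is what lets membership that fails at a small index persist at every larger index.

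Then I would take an arbitrary point $x\in\bigcap_{m\in M}(A_m\cup B_m)$ and suppose, toward a contradiction, that $x$ lies in neither $\bigcap_{m\in M}A_m$ nor $\bigcap_{m\in M}B_m$. The failure of the first membership yields an index $m_1\in M$ with $x\notin A_{m_1}$, and the failure of the second yields an index $m_2\in M$ with $x\notin B_{m_2}$.

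The key step, and the only place the hypotheses are really used, is to invoke directedness to pick $m_3\in M$ with $m_1\leq m_3$ and $m_2\leq m_3$. By the decreasing property we get $A_{m_3}\subseteq A_{m_1}$ and $B_{m_3}\subseteq B_{m_2}$, hence $x\notin A_{m_3}$ and $x\notin B_{m_3}$, so that $x\notin A_{m_3}\cup B_{m_3}$. This contradicts $x\in\bigcap_{m\in M}(A_m\cup B_m)$, which forces $x\in A_{m_3}\cup B_{m_3}$. Consequently $x$ must belong to at least one of the two intersections, and the desired inclusion follows.

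I do not expect any genuine obstacle; the argument is a two-line contradiction once the witnesses $m_1,m_2$ are merged. The one point worth flagging is that directedness is essential rather than cosmetic: if $M$ were merely an index set with no order, the claim would fail, as one sees with two incomparable indices and $A_1=\{x\}=B_2$, $A_2=\emptyset=B_1$, where $x$ lies in every $A_m\cup B_m$ but in neither intersection. Thus the heart of the matter is precisely the ability to combine the two failing indices into a single upper bound.
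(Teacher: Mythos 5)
Your proof is correct. It is logically the contrapositive of the statement, organized around merging two witnesses: from $x\notin\bigcap_m A_m$ and $x\notin\bigcap_m B_m$ you extract indices $m_1,m_2$, use directedness to find a common upper bound $m_3$, and let the decreasing property propagate both non-memberships to $m_3$, contradicting $x\in A_{m_3}\cup B_{m_3}$. The paper instead argues directly via a cofinality dichotomy: it sets $K=\{k\in M:x\in A_k\}$ and $L=M\setminus K$, shows that if $K$ is cofinal then $x\in\bigcap_m A_m$, and that if $K$ is not cofinal then $L$ is cofinal (this is where directedness enters for the paper) and hence $x\in\bigcap_m B_m$. The two arguments use the same two hypotheses in the same essential way, but yours is shorter and avoids introducing cofinality altogether, while the paper's version has the mild virtue of explicitly identifying which of the two intersections contains $x$ in terms of the cofinality of $K$. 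Your remark that directedness is indispensable, with the two-incomparable-indices counterexample, is accurate and a useful sanity check; your reading of ``decreasing'' ($A_{m'}\subseteq A_m$ for $m\leq m'$) is also the one the paper needs when it applies the lemma to $(\mathcal{U}_{x_0},\supseteq)$.
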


\begin{proof}[Proof.]
Let $x\in \bigcap\limits_{m\in M}(A_m\cup B_m)$. Then for any $m\in M$ we have that $x\in A_m\cup B_m$. That is $x\in A_m$ or $x\in B_m$. Let $K=\{k\in M:x\in A_k\}$ and $L=M\setminus K$. Then for any $l\in L$ we have that $x\in B_l$.

Assume that $K$ is cofinal. Then for any $m\in M$ there is $k\in K$ such that $m\leq k$. If $m\in M$ then $m\leq k$ for some $k\in M$ and thus, $x\in A_k\subseteq A_m$. Therefore, $x\in A_m$ for any $m$. Thus, $x\in \bigcap\limits_{m\in M} A_m$.

Let $K$ is not cofinal. Then
\[
\mbox{there exists } m_0\in M \mbox{ such that for any } k\in K \mbox{ we have that } m_0\nleq k. \eqno (*)
\]
Check that a set $L$ is cofinal. Pick $m\in M$. Since $M$ is a directed set, there exists $l\in M$ such that $l\geq m$ and $l\geq m_0$. Then by $(*)$ we have that $l\notin K$. That is $l\in M\setminus K=L$. So, $L$ is cofinal. If $m\in M$ then there exists $l\in L$ such that $m\leq l$ and thus, $x\in B_l\subseteq B_m$. Therefore, $x\in \bigcap\limits_{m\in M} B_m$.
\end{proof}

\begin{thm}\label{T2}
Let $X$ be a metrizable topological space, $Y$ be a dense subspace of a metrizable compact space $\overline{Y}$, $L$ be a closed nowhere dense subset of $X$, $\Phi:L\multimap \overline{Y}$ be an upper continuous compact-valued multifunction and $D\subseteq X\setminus L$ such $L\subseteq \overline{D}$. Then there exists a function $f:D\to Y$ such that $\overline{f} (x)=\Phi (x)$ for any $x\in L$.
\end{thm}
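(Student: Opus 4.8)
The plan is to obtain $f$ by gluing together two auxiliary functions: one, coming from Theorem~\ref{T1}, that realizes $\Phi$ exactly but only on a thin discrete set, and another, coming from Lemma~\ref{L1}, that keeps all cluster sets below $\Phi$ on the whole of $D$. Since Theorem~\ref{T1} is stated only for the full complement $D=X\setminus L$, the first step must be a reduction of the ambient space.

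First I would pass to the subspace $X'=L\cup D$ of $X$ with the induced metric. Then $D=X'\setminus L$, and since $L\subseteq\overline{D}$ the closure of $D$ in $X'$ is all of $X'$, so $D$ is dense in $X'$; moreover $L$ is closed in $X'$ and, being disjoint from the dense set $D$, has empty interior in $X'$ and hence is nowhere dense in $X'$. Thus $X'$, $L$, $D=X'\setminus L$ and $\Phi$ satisfy the hypotheses of Theorem~\ref{T1}, which yields a set $A\subseteq D$, discrete in $D$, with $\overline{A}\setminus A=L$, together with a function $f_0\colon A\to Y$ whose cluster set $\overline{f_0}(x)$ equals $\Phi(x)$ for every $x\in L$. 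Because the neighborhoods of a point $x\in L$ in $X'$ are exactly the traces on $X'$ of its neighborhoods in $X$, and $A\subseteq X'$, the cluster set of $f_0$ computed in $X$ coincides with the one computed in $X'$; so $\overline{f_0}(x)=\Phi(x)$ in the original space as well. Independently, Lemma~\ref{L1}, whose hypotheses are precisely those of the present theorem, supplies a function $g\colon D\to Y$ with $\overline{g}(x)\subseteq\Phi(x)$ for every $x\in L$.

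Now I would define $f\colon D\to Y$ by $f(x)=f_0(x)$ for $x\in A$ and $f(x)=g(x)$ for $x\in D\setminus A$. The inclusion $\Phi(x)\subseteq\overline{f}(x)$ is immediate, since enlarging the domain of a function can only enlarge its cluster set, whence $\overline{f}(x)\supseteq\overline{f_0}(x)=\Phi(x)$ for $x\in L$. For the reverse inclusion, fix $x\in L$ and observe that for every neighborhood $U$ of $x$,
\[
\overline{f(U\cap D)}=\overline{f_0(U\cap A)}\cup\overline{g(U\cap(D\setminus A))},
\]
because closure commutes with finite unions. Ordering the family $\mathcal{U}_x$ by reverse inclusion turns $U\mapsto\overline{f_0(U\cap A)}$ and $U\mapsto\overline{g(U\cap(D\setminus A))}$ into decreasing nets, so Lemma~\ref{L2} yields
\[
\overline{f}(x)=\bigcap_{U\in\mathcal{U}_x}\overline{f(U\cap D)}\subseteq\overline{f_0}(x)\cup\overline{g|_{D\setminus A}}(x)\subseteq\Phi(x)\cup\Phi(x)=\Phi(x),
\]
where $\overline{g|_{D\setminus A}}(x)\subseteq\overline{g}(x)\subseteq\Phi(x)$. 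Combining the two inclusions gives $\overline{f}(x)=\Phi(x)$ for every $x\in L$.

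The two routine ingredients—that enlarging the domain can only enlarge the cluster set, and that passing to the subspace $X'$ leaves cluster sets at points of $L$ unchanged—follow directly from the definition. I expect the one point needing genuine care to be the upper bound: one must recognize that the cluster set of the glued function is the intersection over $\mathcal{U}_x$ of the union of the two closure-nets, and then distribute this intersection over the union. That distribution fails for arbitrary families but is valid here precisely because the two nets are decreasing, which is exactly the content of Lemma~\ref{L2}.
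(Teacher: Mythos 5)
Your proposal is correct and follows essentially the same route as the paper: Theorem~\ref{T1} supplies the discrete piece realizing $\Phi$ from below, Lemma~\ref{L1} supplies the complementary piece staying inside $\Phi$, and Lemma~\ref{L2} distributes the intersection over the union of the two decreasing nets. Your explicit reduction to the subspace $X'=L\cup D$ (which the paper glosses over when invoking Theorem~\ref{T1}) and your use of $\overline{g|_{D\setminus A}}(x)\subseteq\overline{g}(x)$ to avoid the paper's case split on whether $x_0\in\overline{D_2}$ are minor but sound refinements of the same argument.
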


\begin{proof}[Proof.]
By Theorem~\ref{T1} we have that there exists a closed discrete set ${D_1\subseteq D}$ and a function $f_1:D_1\to Y$ such that $\overline{D_1}\setminus D_1=L$ and $\overline{f}_1 (x)=\Phi(x)$ as $x\in L$.

Put $D_2=D\setminus D_1$, $L_2=\overline{D}_2\cap L$ and $\Phi_2=\Phi|_{L_2}$. Obviously, that $L_2$ is closed and nowhere dense and $\Phi_2$ is an upper continuous compact-valued multifunction, moreover $L_2\subseteq \overline{D}_2$. Thus, by Lemma~\ref{L1} there exists a function $f_2:D_2\to Y$ such that $\overline{f}_2(x)\subseteq \Phi_2 (x)=\Phi (x)$ as $x\in L_2$. Define a function $f:D\to Y$ as follows:
$$f(x)=\left\{\begin{array}{ll}
f_1(x),\mbox{ if } x\in D_1 ,\\
f_2(x),\mbox{ if } x\in D_2;
\end{array}\right.$$
for any $x\in D=D_1\sqcup D_2$. Check that $f$ is desired. Fix $x_0\in D$ and show that $\overline{f}(x_0)=\Phi (x_0)$.

Firstly, let us find out that $\overline{f}(x_0)\supseteq \Phi(x_0)$. Indeed, taking into account that $x_0\in L\subseteq \overline{D}_1$ and $D_1\subseteq D$ we have that
$$\overline{f}(x_0)=\bigcap\{\overline{f(U\cap D)}: U\in \mathcal{U}_{x_0}\}\supseteq \bigcap\{\overline{f(U\cap D_1)}: U\in \mathcal{U}_{x_0}\}$$
$$ =\bigcap\{\overline{f_1(U\cap D_1)}: U\in \mathcal{U}_{x_0}\}=\overline{f}_1(x_0)=\Phi(x_0).$$
Check that $\overline{f}(x_0)\subseteq \Phi(x_0)$. Consider the case when $x_0\in L\setminus L_2$. In this case we have that $x_0\notin \overline{D}_2$. Thus, there exists a neighborhood $U_0$ of $x_0$ such that $U_0 \cap D_2=\emptyset$. Then for $U\subseteq U_0$ we have that $U\cap D=U\cap D_1$. Therefore,
$$ \overline{f}(x_0)=\bigcap\{\overline{f(U\cap D)}: U\in \mathcal{U}_{x_0}\}=\bigcap\{\overline{f(U\cap D)}: U\subseteq U_0, U\in \mathcal{U}_{x_0}\}$$
$$ = \bigcap\{\overline{f(U\cap D_1)}: U\subseteq U_0, U\in \mathcal{U}_{x_0}\}=\bigcap\{\overline{f_1(U\cap D)}: U\in \mathcal{U}_{x_0}\}=\overline{f}_1(x_0)$$
$$ =\Phi(x_0).$$
Let $x_0\in L_2$. Using Lemma~\ref{L2} for a directed set ${(M,\leq)=(\mathcal{U}_{x_0},\supseteq)}$, decreasing families $A_U=\overline{f(U\cap D_1)}$ and ${B_U=\overline{f(U\cap D_2)}}$, where $U\in \mathcal{U}_{x_0}$, we have that
$$ \overline{f}(x_0)=\bigcap\{\overline{f(U\cap D)}: U\in \mathcal{U}_{x_0}\}=\bigcap\{\overline{f(U\cap D_1)}\cup \overline{f(U\cap D_2)}: U\in \mathcal{U}_{x_0}\}$$
$$\subseteq \bigcap\{\overline{f(U\cap D_1)}: U\in \mathcal{U}_{x_0}\}\cup \bigcap\{\overline{f(U\cap D_2)}: U\in \mathcal{U}_{x_0}\}$$
$$ =\bigcap\{\overline{f_1(U\cap D_1)}: U\in \mathcal{U}_{x_0}\}\cup \bigcap\{\overline{f_2(U\cap D_2)}: U\in \mathcal{U}_{x_0}\}$$
$$ =\overline{f}_1 (x_0)\cup \overline{f}_2 (x_0)\subseteq \Phi (x_0).$$
\end{proof}

\end{document}